\documentclass[12pt,oneside]{amsart}

\setlength{\textwidth}{150mm}
\setlength{\textheight}{220mm}
\setlength{\topmargin}{0mm}
\setlength{\oddsidemargin}{5.5mm}
\setlength{\evensidemargin}{5.5mm}

\usepackage{amssymb}
\usepackage{amscd}

\title[Characterization of a Hermitian curve by Galois point]{Characterization of a Hermitian curve \\ by Galois point}
\author{Satoru Fukasawa}

\subjclass[2000]{Primary 14H50; Secondary 12F10}
\keywords{Galois point, plane curve, positive characteristic, Hermitian curve}
\address{Department of Mathematical Sciences, Faculty of Science, Yamagata University, Kojirakawa-machi 1-4-12, Yamagata 990-8560, Japan}
\email{s.fukasawa@sci.kj.yamagata-u.ac.jp} 

%\date{2011, 9/18}

\newtheorem{theorem}{}
 
\newtheorem{proposition}{}

\newtheorem{lemma}{Lemma}
\newtheorem{fact}{Fact}

\theoremstyle{definition}

\begin{document}
\begin{abstract} 
For a plane curve, a point in the projective plane is said to be Galois when the point projection induces a Galois extension of function fields. 
We give a new characterization of a Fermat curve whose degree minus one is a power of $p$ in characteristic $p>2$, which is sometimes called Hermitian,  by the number of Galois points lying on the curve.  
\end{abstract}
\maketitle

\section{Introduction}  
Let $C \subset \Bbb P^2$ be an irreducible plane curve of degree $d \ge 4$ over an algebraically closed field $K$ of characteristic $p \ge 0$ and let $K(C)$ be the function field of $C$. 
The point projection $\pi_P: C \dashrightarrow \Bbb P^1$ from a point $P \in \Bbb P^2$ induces a field extension $K(C)/\pi_P^*K(\Bbb P^1)$ of function fields. 
When the extension is Galois, we call the point $P$ a {\it Galois point} for $C$. 
This notion was introduced by H. Yoshihara (\cite{fukasawa1}, \cite{miura-yoshihara}, \cite{yoshihara1}). 
A Galois point $P \in \Bbb P^2$ is said to be inner if $P \in C_{\rm sm}$, where $C_{\rm sm}$ is the smooth locus of $C$. 
We denote by $\delta(C)$ the number of inner Galois points. 
If there exist infinitely many inner Galois points, we define as $\delta(C)=\infty$. 
Otherwise, we define as $\delta(C)<\infty$. 
It is remarkable that many classification results of algebraic varieties have been obtained in the theory of Galois point.

When $p=0$, Yoshihara determined the number $\delta(C)$ for smooth curves (\cite{yoshihara1}). 
Miura gave a certain inequality related to $\delta(C)$ if $d-1$ is prime (\cite{miura}). 
In $p>0$, Homma proved that $\delta(H)=(p^e)^3+1$ for a Fermat curve $H$ of degree $p^e+1$ (\cite{homma2}), which is sometimes called Hermitian. 
Recently, the present author determined $\delta(C)$ for any other smooth curve $C$ (\cite{fukasawa3}). 
On the other hand, Hasegawa and the present author \cite{fukasawa-hasegawa} found examples of plane curves having infinitely many Galois points and classified them. 
Then, we have a natural question: {\it When $\delta(C)$ is finite, what is the maximal number $\delta(C)$?} 
The purpose of this paper is to answer this question, which leads to a new characterization of a Hermitian curve. 

\begin{theorem} \label{main theorem}
Let $C \subset \Bbb P^2$ be an irreducible plane curve of degree $d \ge 4$ over an algebraically closed field $K$ of characteristic $p \ne 2$. 
If $\delta(C) < \infty$, then $\delta(C) \le (d-1)^3+1$. 
Furthermore, $\delta(C)=(d-1)^3+1$ if and only if $p>0$, $d-1$ is a power of $p$, and $C$ is projectively equivalent to a Fermat curve. 
\end{theorem}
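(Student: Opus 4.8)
The plan is to study the simultaneous action of the Galois groups on the finite set $\Delta$ of inner Galois points, and to reduce the numerical bound to a bound on how these points distribute along lines. Recall that for an inner Galois point $P$ the projection $\pi_P\colon C\dashrightarrow\mathbb{P}^1$ is a Galois covering of degree $d-1$ whose Galois group $G_P$ is realized inside $\mathrm{PGL}_3(K)$, fixing $P$ and stabilizing $C$, so that $C/G_P\cong\mathbb{P}^1$. Since any projective transformation stabilizing $C$ permutes inner Galois points, each $G_P$ acts on $\Delta$ with $P$ as a fixed point, and moreover for a line $\ell\ni P$ that is not a branch line of $\pi_P$ the fibre $(\ell\cap C)\setminus\{P\}$ is a single free $G_P$-orbit of size $d-1$. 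Because $G_P$ preserves $\Delta$, as soon as such a line contains one further Galois point it contains $d-1$ of them.

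First I would extract the easy half of the estimate. Fix $P\in\Delta$ and let $\beta$ denote the number of lines through $P$ that meet $\Delta\setminus\{P\}$. Every line meets $C$ in at most $d$ points, hence in at most $d-1$ points besides $P$, so summing over these $\beta$ directions gives $\delta(C)-1\le (d-1)\beta$. Thus the whole bound $\delta(C)\le (d-1)^3+1$ reduces to the single inequality $\beta\le (d-1)^2$: through any inner Galois point, at most $(d-1)^2$ directions are occupied by further inner Galois points.

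The hard part will be exactly this inequality $\beta\le(d-1)^2$. The obstacle is that in positive characteristic $\pi_P$ may be non-classical: the naive inflectionary argument (which in characteristic $0$ already forces $\delta(C)=O(d)$ via the ramification divisor of the net of lines) breaks down, and the Hermitian curve is precisely the extremal non-classical case. My strategy is therefore to exploit the interaction of the groups $G_Q$ for several $Q\in\Delta$ rather than any single projection: I would analyze the fixed loci on $\mathbb{P}^2$ of the nontrivial elements of each $G_P$ (dictated by the wild ramification of $\pi_P$), bound by a B\'ezout estimate the number of fixed points such an element can have on $C$, and use these constraints on how two of the groups $G_P,\,G_Q$ generate inside $\mathrm{PGL}_3(K)$ to limit the occupied directions, yielding $\beta\le(d-1)^2$ and hence $\delta(C)\le(d-1)^3+1$.

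For the equality case I would run the chain of inequalities backwards. Equality forces $G_P$ to act freely on $\Delta\setminus\{P\}$ with exactly $(d-1)^2$ complete fibres, so every nontrivial element of $G_P$ is fixed-point-free on $\Delta\setminus\{P\}$ and the ramification of $\pi_P$ is totally wild; this compels $G_P$ to be a $p$-group, whence $p>0$ and $d-1=q$ is a power of $p$. In this situation every line joining two inner Galois points carries exactly $q+1$ of them and every pair lies on a unique such line, so $(\Delta,\text{Galois-lines})$ is a Steiner system $S(2,q+1,q^3+1)$, that is, a unital. I would then identify the generated group $G=\langle G_Q:Q\in\Delta\rangle\le\mathrm{PGL}_3(K)$ with $\mathrm{PGU}(3,q)$ by means of the classification of finite subgroups of $\mathrm{PGL}_3(K)$, recognize the unital as the Hermitian one, and conclude that $C$ is the unique $G$-invariant curve of degree $q+1$ through these points, namely the Hermitian curve; over an algebraically closed field this is projectively equivalent to the Fermat curve $X^{q+1}+Y^{q+1}+Z^{q+1}=0$. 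The converse implication is Homma's computation $\delta(H)=q^3+1$ recalled in the introduction.
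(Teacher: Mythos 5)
Your proposal rests on a premise that the paper never grants and that is false in the generality of the theorem: that each Galois group $G_P$ acts on $\Bbb P^2$ by linear automorphisms, hence permutes the set $\Delta$ of inner Galois points and moves whole fibres $(\ell\cap C)\setminus\{P\}$ around. The statement covers arbitrary irreducible plane curves of degree $d\ge 4$, including singular ones (indeed the paper must separately handle the case of a point of multiplicity $d-1$, where $\hat{C}$ is rational); for such curves the elements of $G_P$ are merely birational self-maps of $C$, i.e.\ automorphisms of the normalization $\hat{C}$, and in general they do not extend to elements of ${\rm PGL}_3(K)$. Without linearity, $\sigma\in G_P$ has no reason to preserve the property ``being a Galois point,'' so the orbit argument (``as soon as such a line contains one further Galois point it contains $d-1$ of them''), the fixed-locus/B\'ezout analysis in ${\rm PGL}_3(K)$, and the entire equality-case identification via finite subgroups of ${\rm PGL}_3(K)$ and unitals all lose their foundation. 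Even for smooth curves, where linearity does hold, it is a theorem that needs to be invoked, not an immediate consequence of $P$ being Galois.

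Separately, the reduction $\delta(C)-1\le(d-1)\beta$ is correct but trivial, and the entire content of the bound is the inequality $\beta\le(d-1)^2$, for which you offer only a strategy (``analyze the fixed loci\dots bound by a B\'ezout estimate\dots limit the occupied directions'') with no actual argument; a priori there are infinitely many lines through $P$, and nothing you write rules out more than $(d-1)^2$ of them meeting $\Delta$. The paper's route is entirely different and avoids these issues: it assumes $\delta(C)\ge(d-1)^3+1$ and derives a contradiction (unless $C$ is Fermat of degree $p^e+1$) by combining the St\"ohr--Voloch count of flexes, the Pl\"ucker bound on multiple tangent lines of the dual curve, the Riemann--Hurwitz formula applied to the projections $\hat{\pi}_P$, and the lemma that a line through two inner Galois points meets every branch of $C$ transversally --- none of which requires $G_P$ to be linear. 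As it stands, your proposal is a plan rather than a proof, and its central mechanism does not apply to the singular curves the theorem must address.
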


\section{Preliminaries}
Let $(X:Y:Z)$ be a system of homogeneous coordinates of the projective plane $\Bbb P^2$ and let $C \subset \Bbb P^2$ be an irreducible plane curve of degree $d \ge 4$. 
We denote by $C_{\rm sm}$ the smooth locus of $C$ and by ${\rm Sing}(C)$ the singular locus of $C$. 
If $P \in C_{\rm sm}$, we denote by $T_PC \subset \Bbb P^2$ the (projective) tangent line at $P$. 
For a projective line $l \subset \Bbb P^2$ and a point $P \in C \cap l$, we denote by $I_P(C, l)$ the intersection multiplicity of $C$ and $l$ at $P$.  

A tangent line at a singular point $Q \in {\rm Sing}(C)$ is defined as follows. Let $f(x,y)$ be the defining polynomial of $C$ in the affine plane defined by $Z \ne 0$, and let $Q=(0:0:1)$. 
We can write $f=f_m+f_{m+1}+\dots+f_d$, where $f_i$ is the $i$-th homogeneous component.
A tangent line at $Q$ is the line defined by an irreducible component of $f_m$. 
Therefore, a line $l$ passing through $Q$ is a tangent line at $Q$ if and only if $I_Q(C,l)>m$. 

Let $r: \hat{C} \rightarrow C$ be the normalization and let $g$ (or $g_C$) be the genus of $\hat{C}$.  
We denote by $\overline{PR}$ the line passing through points $P$ and $R$ when $R \ne P$, and by $\pi_P: C \dashrightarrow \Bbb P^1; R \mapsto \overline{PR}$ the point projection from a point $P \in \Bbb P^2$. 
We write $\hat{\pi}_P=\pi_P \circ r$. 
We denote by $e_{\hat{R}}$ the ramification index of $\hat{\pi}_P$ at $\hat{R} \in \hat{C}$. 
If $R=r(\hat{R}) \in C_{\rm sm}$, then we denote $e_{\hat{R}}$ also by $e_R$.   
It is not difficult to check the following.  

\begin{lemma} \label{index}
Let $P \in \Bbb P^2$ and let $\hat{R} \in \hat{C}$ with $r(\hat{R})=R \ne P$. 
Then for $\hat{\pi}_P$ we have the following. 
\begin{itemize}
\item[(1)] If $P \in C_{\rm sm}$, then $e_P=I_P(C, T_PC)-1$.  
\item[(2)] If $h$ be a linear polynomial defining $\overline{RP}$, then $e_{\hat{R}}={\rm ord}_{\hat{R}}r^*h$. 
In particular, if $R$ is smooth, then $e_R =I_R(C, \overline{PR})$.  
\end{itemize} 
\end{lemma}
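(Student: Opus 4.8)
My plan is to treat both parts through a single local computation, using the standard description of the ramification index of a map of smooth curves as the order of vanishing of the pullback of a uniformizer at the image point. Identify the target $\Bbb P^1$ of $\hat{\pi}_P$ with the pencil of lines through $P$. If $h$ and $h'$ are linearly independent linear forms both vanishing at $P$, then every line of the pencil is cut out by some $\alpha h+\beta h'$, so $(\alpha:\beta)$ are homogeneous coordinates on $\Bbb P^1$ and $t:=\beta/\alpha$ is an affine coordinate vanishing exactly at the point $[\{h=0\}]$ corresponding to the line $\{h=0\}$. A direct substitution shows $\hat{\pi}_P^*t=\pm\, r^*h/r^*h'$ as rational functions on $\hat{C}$. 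Consequently, whenever $\hat{R}\in\hat{C}$ has image $[\{h=0\}]$, the function $t$ is a uniformizer at that image point, and therefore
\[
e_{\hat{R}}={\rm ord}_{\hat{R}}\,\hat{\pi}_P^*t={\rm ord}_{\hat{R}}\,r^*h-{\rm ord}_{\hat{R}}\,r^*h'.
\]

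For part (2), since $R\ne P$ the line $\overline{PR}=\{h=0\}$ has image $[\{h=0\}]$, so the displayed formula applies. The key is to choose the auxiliary form $h'$ so that its line $\{h'=0\}$ is a line through $P$ distinct from $\overline{PR}$; two distinct lines of the pencil meet only at $P$, hence $R\notin\{h'=0\}$ and ${\rm ord}_{\hat{R}}\,r^*h'=0$. This yields $e_{\hat{R}}={\rm ord}_{\hat{R}}\,r^*h$. The ``in particular'' statement then follows because, when $R$ is smooth, $r$ is a local isomorphism at $\hat{R}$, so ${\rm ord}_{\hat{R}}\,r^*h={\rm ord}_R(h|_C)=I_R(C,\overline{PR})$ by the definition of intersection multiplicity.

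For part (1), I would first note that although $\pi_P$ is a priori undefined at the center $P$, its composite $\hat{\pi}_P$ with the normalization extends to a morphism, and a limit-of-secants computation along the local branch at $P$ identifies $\hat{\pi}_P(\hat{P})$ with the point $[T_PC]$. Applying the displayed formula with $h$ a linear form defining $T_PC$ and $h'$ defining any line through $P$ other than $T_PC$, I have ${\rm ord}_{\hat{P}}\,r^*h=I_P(C,T_PC)$, while ${\rm ord}_{\hat{P}}\,r^*h'=I_P(C,\{h'=0\})=1$ because a non-tangent line through the smooth point $P$ meets $C$ there with multiplicity exactly one. Subtracting gives $e_P=I_P(C,T_PC)-1$. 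The one genuinely delicate point is this last computation: one must justify the extension of $\hat{\pi}_P$ over $\hat{P}$ and the identification of its image with the tangent line, and verify that the transversal form $h'$ contributes order exactly one — this is precisely what produces the $-1$, in contrast with part (2), where the auxiliary form could be chosen not to vanish at $R$ at all. The remaining verifications (that $t$ is a uniformizer, and the elementary order-of-section bookkeeping) are routine.
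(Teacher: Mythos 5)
Your proof is correct: the single computation $\hat{\pi}_P^*t=\pm\, r^*h/r^*h'$ with $t$ a uniformizer at $[\{h=0\}]$ yields part (2) by choosing $h'$ through $P$ but not through $R$, and part (1) by taking $h$ to define $T_PC$ and noting ${\rm ord}_{\hat{P}}r^*h'=1$ for any transversal line, which is exactly where the $-1$ comes from. The paper gives no proof of this lemma (it is dismissed as ``not difficult to check''), and your argument is precisely the standard verification that the author had in mind, with the genuinely delicate points --- extending $\hat{\pi}_P$ over $\hat{P}$ with image $[T_PC]$, and the order-one contribution of the auxiliary form --- correctly identified and handled.
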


Let $\check{\Bbb P}^2$ be the dual projective plane, which parameterizes lines on $\Bbb P^2$. 
The dual map $\gamma: C_{\rm sm} \rightarrow \check{\Bbb P}^2$ of $C$ is a rational map which assigns a smooth point $P \in C_{\rm sm}$ to the tangent line $T_PC \in \check{\Bbb P}^2$ at $P$, and the dual curve $C^* \subset \check{\Bbb P}^2$ is the closure of the image of $\gamma$. 
We denote by $s(\gamma)$ the separable degree of the field extension $K(C)/\gamma^*K(C^*)$, which is induced from the dual map $\gamma$ of $C$ onto $C^*$, by $q(\gamma)$ the inseparable degree, and by $M(C)$ the generic order of contact (i.e. $I_P(C, T_PC)=M(C)$ for a general point $P \in C$), throughout this paper. 
If the dual map $\gamma$ is separable onto $C^*$, then $s(\gamma)=1$ and $M(C)=2$ (see, for example, \cite[Proposition 1.5]{pardini}). 
If the dual map $\gamma$ of $C$ is not separable, then it follows from a theorem of Hefez-Kleiman (\cite[(3.4)]{hefez-kleiman}) that $M(C)=q(\gamma)$. 
Using this theorem and B\'{e}zout's theorem, we find that $d \ge s(\gamma)q(\gamma)$. 

The order sequence of the morphism $r:\hat{C} \rightarrow \Bbb P^2$ is $\{0, 1, M(C)\}$ (see \cite[Ch. 7]{hkt}, \cite{stohr-voloch}). 
If $\hat{R} \in \hat{C}$ is a non-singular branch, i.e. there exists a line defined by $h=0$ with ${\rm ord}_{\hat{R}}r^*h=1$, then there exists a unique tangent line at $R=r(\hat{R})$ defined by $h_{\hat{R}}=0$ such that ${\rm ord}_{\hat{R}}r^*h_{\hat{R}} \ge M(C)$.  
We denote by $T_{\hat{R}}C \subset \Bbb P^2$ this tangent line, and by $\nu_{\hat{R}}$ the order ${\rm ord}_{\hat{R}}r^*h_{\hat{R}}$ of the tangent line $h_{\hat{R}}=0$ at $\hat{R}$. 
If $\nu_{\hat{R}}-M(C)>0$, then we call the point $\hat{R}$ (or $R=r(\hat{R})$ if $R \in C_{\rm sm}$) a {\it flex}.  
We denote by $\hat{C}_0 \subset \hat{C}$ the set of all non-singular branches and by $F(\hat{C}) \subset \hat{C}_0$ the set of all flexes.  
We recall the following (see \cite[Theorem 1.5]{stohr-voloch}).  

\begin{fact}[Count of flexes] \label{fexes} We have
$$ \sum_{\hat{R} \in \hat{C}_0} (\nu_{\hat{R}}-M(C)) \le (M(C)+1)(2g-2)+3d. $$
\end{fact}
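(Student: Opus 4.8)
The plan is to reduce the entire statement to the count of flexes in Fact \ref{fexes}, the crucial point being that every inner Galois point is forced to be a \emph{total inflection}. First I would record the standard structural fact from the theory of Galois points: for an inner Galois point $P$, the Galois group $G_P$ of the degree $d-1$ extension is realized inside $\mathrm{PGL}_3(K)$ by perspectivities with center $P$, so that every element of $G_P$ fixes $P$ and fixes each line through $P$. Since $P$ is smooth, $G_P$ then fixes the unique branch $\hat{R}=\hat{P}$ over $P$, whence $\hat{P}$ is a totally ramified point of $\hat{\pi}_P$ and $e_P=d-1$. By Lemma \ref{index}(1) this reads $I_P(C,T_PC)=d$, i.e. $\nu_{\hat{P}}=d$; thus each inner Galois point is a flex with $\nu_{\hat{P}}-M(C)=d-M(C)$.

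For the upper bound I would combine this with Fact \ref{fexes}. After disposing of the degenerate case $M(C)=d$ (which I expect to exclude for $d\ge 4$ and $p\ne 2$, since otherwise the generic tangent line would meet $C$ only at its point of tangency), one has $d-M(C)\ge 1$, and distinct inner Galois points give distinct non-singular branches, each contributing $d-M(C)$ to the flex sum. Hence
\[ \delta(C)\,(d-M(C)) \le \sum_{\hat{R}\in\hat{C}_0}(\nu_{\hat{R}}-M(C)) \le (M(C)+1)(2g-2)+3d. \]
Using $g\le (d-1)(d-2)/2$ and $M(C)+1\le d$, the right-hand side is at most $(M(C)+1)((d-1)(d-2)-2)+3d$, so $\delta(C)\le\phi(M(C))$ with $\phi(M)=\big((M+1)((d-1)(d-2)-2)+3d\big)/(d-M)$. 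I would check that $\phi$ is increasing in $M$ and that $\phi(d-1)=d^3-3d^2+3d=(d-1)^3+1$, giving $\delta(C)\le (d-1)^3+1$.

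For the characterization, equality forces every inequality above to be tight. Tightness of $g\le (d-1)(d-2)/2$ makes $C$ smooth; tightness of the monotone bound forces $M(C)=d-1$, which exceeds $2$ since $d\ge 4$, so the dual map is not separable and $M(C)=q(\gamma)$. Because $d\ge s(\gamma)q(\gamma)=s(\gamma)(d-1)$ we obtain $s(\gamma)=1$, so the Gauss map is \emph{purely inseparable} of degree $d-1$; a purely inseparable degree being a power of the characteristic, this yields $p>0$ and $d-1=p^e$, already recovering the Hermitian degree. The reverse implication, $\delta=(p^e)^3+1$ for the Fermat curve of degree $p^e+1$, is Homma's theorem \cite{homma2}.

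The main obstacle is the remaining reconstruction: to show that a smooth curve of degree $p^e+1$ whose Gauss map is purely inseparable of degree $p^e$, carrying $(p^e)^3+1$ total inflections that are all inner Galois points, is projectively equivalent to a Fermat curve. Here I would use the pure inseparability to express $\partial_X F,\partial_Y F,\partial_Z F$ as $p^e$-th powers modulo $F$, and then exploit the very rigid configuration of the $(p^e)^3+1$ totally tangent lines together with the perspectivities, which generate a large subgroup of $\mathrm{PGL}_3(K)$ preserving $C$, to normalize the equation to $X^{p^e+1}+Y^{p^e+1}+Z^{p^e+1}$. I expect the hypothesis $p\ne 2$ to be needed precisely at this step and in excluding $M(C)=d$, in order to rule out the additional Gauss-map degeneracies available in characteristic two.
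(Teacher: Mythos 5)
You have not proved the statement you were asked to prove. The statement at issue is Fact \ref{fexes} itself, the St\"ohr--Voloch count of flexes, which the paper recalls from \cite[Theorem 1.5]{stohr-voloch} without reproving it. Your sketch invokes Fact \ref{fexes} as a black box (``For the upper bound I would combine this with Fact \ref{fexes}'') and instead outlines a derivation of Theorem \ref{main theorem}; as an argument for the fact it is therefore circular, and the actual content of the inequality --- a Wronskian computation --- appears nowhere in your text. A genuine proof runs through the order-sequence machinery the paper has already set up: the morphism $r:\hat{C}\rightarrow\Bbb P^2$, with the linear system of lines, has order sequence $\{0,1,M(C)\}$; the associated ramification (Wronskian) divisor has degree $(0+1+M(C))(2g-2)+3d$; and at a non-singular branch $\hat{R}$ the local orders of the system are $0,1,\nu_{\hat{R}}$, so the Wronskian vanishes at $\hat{R}$ to order at least $(0-0)+(1-1)+(\nu_{\hat{R}}-M(C))=\nu_{\hat{R}}-M(C)$. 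Summing over $\hat{C}_0$ and comparing with the degree of the divisor gives exactly the displayed bound.

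Even read as a proof of Theorem \ref{main theorem}, the sketch fails at its first structural claim: for a singular curve an element of $G_P$ is only a birational self-map of $C$ and need not extend to a linear perspectivity of $\Bbb P^2$, so inner Galois points need not be total inflections with $I_P(C,T_PC)=d$. The paper's own argument is organized around precisely the opposite situation: in cases (I-2) and (II-2) it has $M(C)=2$ and works with the at least $\mu(C)$ points of $\Delta\setminus F(\hat{C})$, i.e.\ Galois points that are \emph{not} flexes and satisfy $I_P(C,T_PC)=2$, and it obtains the bound from the Pl\"ucker formula (Fact \ref{plucker}) applied to multiple tangent lines, together with Lemmas \ref{twoGalois} and \ref{rational-ramification}, rather than by feeding the Galois points into the flex count. (Your perspectivity claim is valid for smooth plane curves, where automorphisms are linear and fix the center of projection, which is exactly why it cannot see the singular cases (I) and (II) that occupy most of the paper.) So the proposal both assumes the assigned statement and would not substitute for the paper's proof of the theorem it actually addresses.
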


We also recall Pl\"ucker formula. (This version is obtained easily from considering the projection from a general point of $\Bbb P^2$ and the number of singular points of $C^*$, since a general point of $\Bbb P^2$ corresponds to a general line in $\check{\Bbb P}^2$. See also \cite{piene}.)

\begin{fact}[Pl\"ucker formula] \label{plucker} 
Let $d^*$ be the degree of the dual curve $C^*$.
Then, 
$$ d^* \le 2g-2+2d. $$
If $s(\gamma)=1$, then the number of multiple tangent lines (i.e. lines $L$ such that there exist two distinct points $\hat{R}_1, \hat{R}_2 \in \hat{C}_0$ with $L=T_{\hat{R}_1}C=T_{\hat{R}_2}C$) is at most 
$$ \frac{(d^*-1)(d^*-2)}{2} \le \frac{(2g-2+2d-1)(2g-2+2d-2)}{2}. $$
\end{fact}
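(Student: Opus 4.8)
The plan is to prove the two assertions separately, using a projection from a general point of $\Bbb P^2$ for the bound on $d^*$ and a count of singular points of $C^*$ for the bound on the number of multiple tangent lines, precisely as the parenthetical remark indicates.

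For the first inequality, I would fix a general point $P \in \Bbb P^2 \setminus C$ and consider $\hat{\pi}_P: \hat{C} \rightarrow \Bbb P^1$, which has degree $d$. Since $d \ge 4$, the curve $C$ is neither a line nor a conic and hence is not strange, so for general $P$ the projection $\hat{\pi}_P$ is separable. By the definition of the degree of the dual curve, a general point $P$ lies on exactly $d^*$ tangent lines of $C$, namely the $d^*$ transverse intersection points of $C^*$ with the line of $\check{\Bbb P}^2$ dual to $P$; for general $P$ these are tangent lines at nonsingular branches. By Lemma \ref{index}(2), if $L$ is such a tangent line, tangent at a branch $\hat{R} \in \hat{C}_0$, then $e_{\hat{R}} = {\rm ord}_{\hat{R}} r^* h \ge M(C) \ge 2$, where $h$ defines $L$; thus each of these $d^*$ tangent lines yields at least one ramification point of $\hat{\pi}_P$, so $\hat{\pi}_P$ has at least $d^*$ ramification points. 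On the other hand, Riemann--Hurwitz gives $2g - 2 = -2d + \deg R$ for the ramification divisor $R$, and since every ramification point contributes a different exponent $\ge e_{\hat{R}} - 1 \ge 1$ to $\deg R$, the number of ramification points is at most $\deg R = 2g - 2 + 2d$. Combining the two counts yields $d^* \le 2g - 2 + 2d$.

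For the second inequality, I would assume $s(\gamma) = 1$, so that $K(C)/\gamma^* K(C^*)$ is purely inseparable; consequently the induced morphism from $\hat{C}$ to the normalization of $C^*$ is set-theoretically bijective. Hence two distinct branches $\hat{R}_1, \hat{R}_2 \in \hat{C}_0$ with $T_{\hat{R}_1}C = T_{\hat{R}_2}C = L$ map to two distinct points of the normalization lying over the single point $L \in C^*$, which forces $L$ to be a singular point of $C^*$. Distinct multiple tangent lines therefore produce distinct singular points of $C^*$, so their number is at most the number of singular points of the irreducible plane curve $C^*$ of degree $d^*$. Since the arithmetic genus $(d^*-1)(d^*-2)/2$ equals $g_{C^*} + \sum_Q \delta_Q$ with each delta-invariant $\delta_Q \ge 1$, the curve $C^*$ has at most $(d^*-1)(d^*-2)/2$ singular points. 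Finally, substituting $d^* \le 2g - 2 + 2d$ into the nondecreasing function $x \mapsto (x-1)(x-2)/2$ on $x \ge 1$ gives the displayed upper bound.

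The main obstacle is the positive-characteristic bookkeeping. I must guarantee that the general projection is separable so that Riemann--Hurwitz applies with a nonnegative different, and I must use the inequality between the different exponent and $e_{\hat{R}} - 1$ rather than equality, so that any wild ramification only strengthens the bound. In the second part the delicate point is that $s(\gamma) = 1$ does not make $\gamma$ an isomorphism onto $C^*$, nor even injective as a map of branches; it only makes the map to the normalization bijective, and it is this bijectivity that converts a coincidence of tangent lines into a genuine singularity of $C^*$.
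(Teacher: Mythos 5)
Your proposal is correct in substance and follows precisely the route the paper itself sketches in its parenthetical remark: the bound $d^* \le 2g-2+2d$ is obtained by counting, via Riemann--Hurwitz, the ramification points of the projection $\hat{\pi}_P$ from a general point $P \in \Bbb P^2$ (each of the $d^*$ tangent lines through $P$, i.e.\ each point of $C^*$ on the general line of $\check{\Bbb P}^2$ dual to $P$, supplying a distinct ramification point of index at least $M(C) \ge 2$ by Lemma \ref{index}), and the bound on multiple tangents follows because, when $s(\gamma)=1$, each multiple tangent line is a singular point of the irreducible curve $C^* \subset \check{\Bbb P}^2$ of degree $d^*$, and such a curve has at most $(d^*-1)(d^*-2)/2$ singular points by the genus formula. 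The one flawed step is your justification of separability of the general projection: it is \emph{not} true that $d \ge 4$ prevents $C$ from being strange --- strange curves of arbitrarily large degree exist in positive characteristic (only the smooth strange curves are lines and conics, and the present paper allows $C$ to be singular; indeed Section 3 explicitly argues against strange centers). The conclusion you need nonetheless survives, because a strange center is unique and, as recorded in Section 2, $\hat{\pi}_Q$ is inseparable if and only if $C$ is strange and $Q$ is that center; hence $\hat{\pi}_P$ is separable for every $P$ outside at most one point, in particular for a general $P$. With that justification replaced, your argument is complete, and your two points of care --- using the different-exponent inequality (contribution at least $e_{\hat{R}}-1$) so that wild ramification only strengthens the bound, and deducing singularity of $C^*$ from bijectivity of $\hat{C}$ onto the normalization of $C^*$ rather than from any spurious injectivity of $\gamma$ --- are exactly the right positive-characteristic safeguards.
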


We recall the definition of strangeness. 
If there exists a point $Q \in \Bbb P^2$ such that almost all tangent lines of $C$ pass through $Q$, then $C$ is said to be {\it strange} and $Q$ is called a strange center (see \cite{bayer-hefez}, \cite{kleiman}).  
It is easily checked that a strange center is unique for a strange curve. 
Using Lemma \ref{index}, we find that the projection $\hat{\pi}_Q$ from a point $Q$ is not separable if and only if $C$ is strange and $Q$ is the strange center.
If $C$ is strange, then we can identify the dual map $\gamma$ with the projection $\pi_Q$ from the strange center $Q$. 

We denote by $\Delta \subset \hat{C}$ the set of all points $\hat{P} \in \hat{C}$ such that $r(\hat{P}) \in C$ is smooth and Galois with respect to a plane curve $C \subset \Bbb P^2$.  
We denote by $G_P$ the group of birational maps from $C$ to itself corresponding to the Galois group ${\rm Gal}(K(C)/\pi_P^*K(\Bbb P^1))$ when $P$ is Galois.
We find easily that the group $G_P$ is isomorphic to a subgroup of the automorphism group ${\rm Aut}(\hat{C})$ of $\hat{C}$. 
Frequently, we identify $G_P$ with the subgroup.  
If a Galois covering $\theta:C \rightarrow C'$ between smooth curves is given, then the Galois group $G$ acts on $C$ naturally. 
We denote by $G(P)$ the stabilizer subgroup of $P$ and by $e_P$ the ramification index at $P$.  
The following fact is useful to find Galois points (see \cite[III. 7.1, 7.2 and 8.2]{stichtenoth}). 
\begin{fact} \label{Galois covering} 
Let $\theta: C \rightarrow C'$ be a Galois covering of degree $d$ with a Galois group $G$. 
Then we have the following. 
\begin{itemize}
\item[(1)] The order of $G(P)$ is equal to $e_P$ at $P$ for any point $P \in C$. 
\item[(2)] If $\theta(P)=\theta(Q)$, then $e_P=e_Q$. 
\item[(3)] The index $e_P$ divides the degree $d$. 
\end{itemize}
\end{fact}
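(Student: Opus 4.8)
The plan is to translate the statement into the language of function fields and apply the standard decomposition theory for Galois extensions, which is precisely the content cited from Stichtenoth. Writing $F=\theta^*K(C')$ and $E=K(C)$, the covering $\theta$ corresponds to a Galois extension $E/F$ whose Galois group is identified with $G$, so that $|G|=[E:F]=\deg\theta=d$. A point $P\in C$ lying over $P'=\theta(P)$ corresponds to a place of $E$ above the place $P'$ of $F$, and the stabilizer $G(P)=\{\sigma\in G:\sigma(P)=P\}$ is exactly the decomposition group of that place. The crucial simplification is that $K$ is algebraically closed, so the residue field at every place of $C$ and of $C'$ is $K$ itself; hence every residue degree equals $1$.

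I would first record the transitivity of the action: the Galois group $G$ acts transitively on each fibre $\theta^{-1}(P')$ (\cite[III.7.1]{stichtenoth}). Granting this, part (2) follows at once: if $\theta(P)=\theta(Q)$ then $Q=\sigma(P)$ for some $\sigma\in G$, and since $\sigma$ is an automorphism of $C$ over $C'$ it carries the local ring at $P$ isomorphically onto that at $Q$ while fixing a uniformizer of $F$, so it preserves ramification indices and $e_Q=e_P$. Transitivity also yields the orbit--stabilizer equality $|\theta^{-1}(P')|=[G:G(P)]=d/|G(P)|$.

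For part (1) I would invoke that the order of a decomposition group in a Galois extension equals $e_Pf_P$ (\cite[III.8.2]{stichtenoth}); since $f_P=1$ this gives $|G(P)|=e_P$ directly. A more self-contained route combines part (2) with the fundamental degree identity: over an algebraically closed field the pullback divisor of $P'$ satisfies $\sum_{P_i\mapsto P'}e_{P_i}=d$, and by part (2) all the $e_{P_i}$ in the fibre are equal to $e_P$, so the identity reads $|\theta^{-1}(P')|\cdot e_P=d$; substituting the orbit--stabilizer count gives $(d/|G(P)|)\,e_P=d$, hence $e_P=|G(P)|$. Part (3) is then immediate from Lagrange's theorem: $G(P)$ is a subgroup of the group $G$ of order $d$, so $e_P=|G(P)|$ divides $d$.

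Since every ingredient is classical, there is no real obstacle; the only point requiring care is the reduction $f_P=1$, which rests on $K$ being algebraically closed and is exactly what forces the decomposition group to have order $e_P$ (equivalently, to coincide with the inertia group). The one genuinely nontrivial external input is the transitivity of $G$ on fibres, and I would cite it rather than reprove the approximation argument underlying it.
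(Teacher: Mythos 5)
Your proposal is correct and coincides with the paper's treatment: the paper states this result as a Fact with a citation to Stichtenoth III.7.1, 7.2 and 8.2, and your argument is precisely the standard decomposition-theory proof those citations encode --- transitivity of $G$ on fibres, $|G(P)|=e_Pf_P$ with $f_P=1$ because $K$ is algebraically closed, and Lagrange's theorem for divisibility. The one point needing care, the reduction to residue degree $1$, is handled correctly, so there is nothing to fix.
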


By using Lemma \ref{index} and Fact \ref{Galois covering}, we have the following. 

\begin{lemma} \label{twoGalois} 
Let $P_1, P_2 \in C_{\rm sm}$ be two distinct Galois points and let $h$ be a defining polynomial of the line $\overline{P_1P_2}$. 
Then, ${\rm ord}_{\hat{R}} r^*h=1$ for any $\hat{R} \in \hat{C}$ with $R=r(\hat{R}) \in \overline{P_1P_2}$ (maybe $R=P_1$ or $P_2$). 
\end{lemma}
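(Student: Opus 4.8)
The plan is to exploit that both $P_1$ and $P_2$ yield Galois projections $\hat{\pi}_{P_1}, \hat{\pi}_{P_2}: \hat{C} \to \Bbb P^1$ of degree $d - 1$, together with the observation that the quantity $m(\hat{R}) := {\rm ord}_{\hat{R}} r^* h$ is \emph{intrinsic}: by Lemma \ref{index}(2) the same integer $m(\hat{R})$ is the ramification index of $\hat{R}$ for $\hat{\pi}_{P_1}$ whenever $r(\hat{R}) \in \ell \setminus \{P_1\}$, and for $\hat{\pi}_{P_2}$ whenever $r(\hat{R}) \in \ell \setminus \{P_2\}$, where $\ell := \overline{P_1 P_2}$. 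The goal is precisely to show $m(\hat{R}) = 1$ for every $\hat{R}$ over $\ell$.

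First I would note that every $\hat{R}$ with $r(\hat{R}) \in \ell \setminus \{P_1\}$ satisfies $\hat{\pi}_{P_1}(\hat{R}) = [\ell]$, the point of $\Bbb P^1$ determined by $\ell$, so these branches constitute one fiber of a Galois covering; Fact \ref{Galois covering}(2) then makes their ramification indices, and hence their values $m(\hat{R})$, all equal to a common constant $a$. Symmetrically the branches over $\ell \setminus \{P_2\}$ share a common value $b$. Writing $\hat{P}_i$ for the unique branch over the smooth point $P_i$, this gives $m(\hat{P}_2) = a$ and $m(\hat{P}_1) = b$, and reduces the statement to proving $a = b = 1$.

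The core is to exclude tangency of $\ell$ at $P_1$ or $P_2$. If $a \ge 2$ and $b \ge 2$ both held, then $\ell = T_{P_1}C = T_{P_2}C$, so $\hat{P}_1$ would lie in the $\hat{\pi}_{P_1}$-fiber over $[\ell]$ with index $I_{P_1}(C, T_{P_1}C) - 1 = b - 1$ by Lemma \ref{index}(1); comparing with the index $a$ of $\hat{P}_2$ in that fiber, Fact \ref{Galois covering}(2) gives $a = b - 1$, while the symmetric computation from $P_2$ gives $b = a - 1$. These are incompatible, so one of $a, b$ is $1$; relabelling, say $b = 1$, i.e. $\ell \ne T_{P_1}C$. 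Assuming now $a \ge 2$, the line is tangent at $P_2$, so $\hat{P}_2$ lies in the $\hat{\pi}_{P_2}$-fiber over $[\ell]$ with index $a - 1$, while $\hat{P}_1$ lies in the same fiber with index $b = 1$; by Fact \ref{Galois covering}(2) this fiber is unramified, and since every branch over $\ell \setminus \{P_1, P_2\}$ would sit in it with index $a \ge 2$, a contradiction follows whenever such a branch exists. This should give $a = 1$ as well.

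The step I anticipate as the main obstacle is exactly the degenerate configuration where $\ell$ meets $C$ only at $P_1$ and $P_2$, leaving no extra branch to exploit. There I would fall back on a degree count: since $\ell \ne T_{P_1}C$, the $\hat{\pi}_{P_1}$-fiber over $[\ell]$ is the singleton $\{\hat{P}_2\}$, whose index must equal the full degree $d - 1$, so $a = d - 1$; but the unramified-fiber comparison above forces $a - 1 = 1$, hence $a = 2$ and $d = 3$, against $d \ge 4$. The bookkeeping requiring genuine care throughout is deciding, for each projection, whether the central branch $\hat{P}_i$ itself lies in the fiber over $[\ell]$ — equivalently whether $\ell = T_{P_i}C$ — since this dictates whether its index is computed by Lemma \ref{index}(1) or (2); once this is tracked cleanly, the combinatorics of Fact \ref{Galois covering}(2) closes the argument.
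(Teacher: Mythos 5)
Your proposal is correct and follows essentially the same route as the paper: both arguments compare the ramification indices in the fibers of $\hat{\pi}_{P_1}$ and $\hat{\pi}_{P_2}$ over the point corresponding to $\overline{P_1P_2}$, using Lemma \ref{index} together with Fact \ref{Galois covering}(2) to derive incompatible values. The only cosmetic difference is that the paper invokes B\'ezout (via $d>3$) to guarantee a third intersection point once $I_{P_1}=1$, $I_{P_2}=2$, whereas you dispose of the no-third-point configuration by a fiber-degree count; both close the same gap.
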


\begin{proof} 
Assume that $R=P_2$ and ${\rm ord}_{\hat{R}}r^*h \ge 2$. 
It follows from Lemma \ref{index} that $e_{P_2}=I_{P_2}(C, \overline{P_1P_2})-1$ for a projection $\hat{\pi}_{P_2}$. 
Then, by Fact \ref{Galois covering}(2) and Lemma \ref{index}, $I_{P_1}(C, \overline{P_1P_2})=I_{P_2}(C, \overline{P_1P_2})-1$. 
If $I_{P_1}(C, \overline{P_1P_2}) \ge 2$, then, for $\hat{\pi}_{P_1}$, $e_{P_1}=I_{P_1}(C, \overline{P_1P_2})-1 \ge 1$ and $e_{P_1}=e_{P_2}=I_{P_2}(C, \overline{P_1P_2})$, by Lemma \ref{index} and Fact \ref{Galois covering}(2). 
Then, we have $I_{P_2}(C, \overline{P_1P_2})-2=I_{P_2}(C, \overline{P_1P_2})$. 
This is a contradiction. 
Therefore, $I_{P_1}(C, \overline{P_1P_2})=1$ and $I_{P_2}(C, \overline{P_1P_2})=2$. 
Since $d >3$, there exist a point $\hat{R}_0 \in \hat{C}$ such that $R_0=r(\hat{R}_0) \in \overline{P_1P_2}$ and $R_0 \ne P_1, P_2$. 
By Fact \ref{Galois covering}(2), we have $e_{\hat{R}_0}=2$ for $\hat{\pi}_{P_1}$ and $e_{\hat{R}_0}=1$ for $\hat{\pi}_{P_2}$. 
This is a contradiction, because $e_{\hat{R}}={\rm ord}_{\hat{R}_0}r^*h$ for each case, by Lemma \ref{index}. 

Assume that $R \ne P_1, P_2$ and ${\rm ord}_{\hat{R}}r^*h \ge 2$. 
Then, by considering $\hat{\pi}_{P_1}$ and Lemma \ref{index} and Fact \ref{Galois covering}(2), $I_{P_2}(C, \overline{P_1P_2})={\rm ord}_{\hat{R}}r^*h$. 
Then, by considering $\hat{\pi}_{P_2}$ and Lemma \ref{index} and Fact \ref{Galois covering}(2), $I_{P_2}(C, \overline{P_1P_2})-1={\rm ord}_{\hat{R}}r^*h$. 
We have $I_{P_2}(C, \overline{P_1P_2})=I_{P_2}(C, \overline{P_1P_2})-1$. 
This is a contradication.  
\end{proof} 

Finally in this section, we mention properties of Galois covering between rational curves.  

\begin{lemma} \label{rational-ramification}
Let $\theta: \Bbb P^1 \rightarrow \Bbb P^1$ be a Galois covering of degree $d \ge 3$ with a Galois group $G$. 
Then we have the following. 
\begin{itemize} 
\item[(1)] Any automorphism $\sigma \in G \setminus \{1\}$ fixes some point. 
\item[(2)] If $P$ is a ramification point and the fiber $\theta^{-1}(\theta(P))$ consists of at least two points, then there exists a ramification point $P'$ with $\theta(P') \ne \theta(P)$.  
\item[(3)] If $\theta$ is ramified only at $P$ and $e_P=d$, then $p>0$ and $d$ is a power of $p$. 
\item[(4)] If $p>0$, $d$ is a power of $p$ and the index $e_P$ at a point $P$ is equal to $d$, then $P$ is a unique ramification point. 
If $P=(1:0)$, then any element $\sigma \in G$ is represented by a matrix 
$$ A_{\sigma}=\left(\begin{array}{cc}
1 & a(\sigma) \\
0 & 1 
\end{array}\right) $$
for some $a(\sigma) \in K$. 
Furthermore, the set $\{a(\sigma)| \sigma \in G\} \subset K$ forms an additive subgroup. 
\end{itemize}  
\end{lemma}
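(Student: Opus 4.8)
The plan is to work throughout with the identification $G \subset {\rm Aut}(\Bbb P^1) = {\rm PGL}_2(K)$, so that each nonidentity $\sigma \in G$ is a fractional linear transformation. For (1), I would take a representing matrix $A_\sigma \in {\rm GL}_2(K)$; since $K$ is algebraically closed it has an eigenvalue, and the corresponding eigenline is a point of $\Bbb P^1$ fixed by $\sigma$. (Equivalently, on an affine chart the fixed locus is cut out by a quadratic equation, which has a root.)

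For (2), I would argue by contradiction by counting fixed points, feeding in (1). Suppose $\theta(P)$ were the only branch point. By Fact \ref{Galois covering} the fiber $\theta^{-1}(\theta(P))=\{P_1,\dots,P_n\}$ has a common ramification index $e$ with $ne=d$, and $n\ge 2$ by hypothesis. Counting incident pairs $(\sigma,Q)$ with $\sigma\ne 1$ and $\sigma(Q)=Q$ in two ways gives
$$\sum_{\sigma \in G \setminus \{1\}} \#{\rm Fix}(\sigma) \;=\; \sum_{Q \in \Bbb P^1} (|G(Q)| - 1) \;=\; \sum_{Q \in \Bbb P^1} (e_Q - 1) \;=\; n(e-1) \;=\; d - n,$$
where the middle equality is Fact \ref{Galois covering}(1) and the last two use that every ramification point lies over $\theta(P)$. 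On the other hand, by (1) each of the $d-1$ nonidentity elements fixes at least one point, so the left-hand side is $\ge d-1$. Hence $d-n \ge d-1$, i.e. $n \le 1$, contradicting $n \ge 2$.

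For (3) and (4) the common first step is to place the distinguished point at $P=(1:0)=\infty$ and note that the hypothesis $e_P = d = |G|$ forces $G(P)=G$ by Fact \ref{Galois covering}(1); thus $G$ fixes $\infty$ and consists of affine maps $z \mapsto \alpha_\sigma z + \beta_\sigma$. In case (4), where $p>0$ and $d=|G|$ is a power of $p$, the homomorphism $\sigma \mapsto \alpha_\sigma$ sends the $p$-group $G$ into $K^{\times}$, whose finite subgroups have order prime to $p$; hence $\alpha_\sigma \equiv 1$ and every $\sigma$ is a translation $z \mapsto z + \beta_\sigma$. This yields the asserted matrix form $A_\sigma$ with $a(\sigma)=\beta_\sigma$, and since composition adds the translation parameters, $\{a(\sigma)\mid \sigma\in G\}$ is an additive subgroup of $K$; moreover translations act freely on $\Bbb A^1 = \Bbb P^1 \setminus \{\infty\}$, so $P$ is the unique ramification point. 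For (3), the hypothesis that $P$ is the only ramification point says that no nonidentity element of $G$ fixes a point of $\Bbb A^1$; since an affine map with $\alpha_\sigma \ne 1$ has a fixed point in $\Bbb A^1$, again every $\sigma$ must be a translation, so $G$ embeds as a finite subgroup of $(K,+)$ of order $d \ge 3$. This is impossible when $p=0$ (as $(K,+)$ is torsion free), forcing $p>0$; and a finite subgroup of $(K,+)$ is an $\Bbb F_p$-vector space, so its order $d$ is a power of $p$.

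The main obstacle is part (2): a direct Riemann--Hurwitz computation does not close the argument in the presence of wild ramification, since the wild different can absorb the deficit that one hopes to exploit. The fixed-point count above sidesteps this entirely by combining (1) with Fact \ref{Galois covering}(1), which is precisely why I would establish (1) first and use it as the engine for (2). The remaining work in (3) and (4) is then only the standard structure theory of finite subgroups of $(K,+)$ and $K^{\times}$ in characteristic $p$.
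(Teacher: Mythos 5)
Your proposal is correct and follows essentially the same route as the paper: part (1) via eigenvectors of a representing matrix, part (2) by combining (1) with Fact \ref{Galois covering}(1) and a counting argument showing the stabilizers of the fiber points cannot account for all of $G$ (you phrase this as an incidence count by contradiction, the paper as a direct choice of $\tau$ outside $\bigcup_i G(P_i)$, but the substance is identical), and parts (3)--(4) by reducing to affine maps fixing $\infty$ and showing the multiplier must be $1$. Your use of the $\Bbb F_p$-vector-space structure of finite subgroups of $(K,+)$ in (3) in place of the paper's Sylow argument is a cosmetic simplification only.
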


\begin{proof}
Note that any automorphism of $\Bbb P^1$ is represented by a matrix $A_{\sigma}$. 
This implies the assertion (1). 

We consider the assertion (2). 
We assume that $\theta^{-1}(\theta(P))=\{P_1, P_2, \ldots, P_s\}$ with $s \ge 2$. 
Then, it follows from Fact \ref{Galois covering}(1)(2) that the order of $G(P_i)$ is equal to the one of $G(P_j)$ for any $i,j$.  
Since $G(P_i) \cap G(P_j)$ is not empty as a set, $\bigcup_iG(P_i) \ne G$ as a set by considering the order. 
Let $\tau \in G \setminus(\bigcup_iG(P_i))$.
By the assertion (1), there is a fixed point of $P'$ by $\tau$.  
It follows from Fact \ref{Galois covering}(1) that $P'$ is a ramification point of $\theta$. 
Then, $\theta(P') \ne \theta(P)$ because $\tau \not\in \bigcup_iG(P_i)$. 

We consider the assertion (3). 
We may assume that $P=(1:0)$. 
Let $e_P=ql$, where $q$ is a power of $p$ and $l$ is not divisible by $p$. 
Let $\sigma \in G(P)$ be any element. 
Then, $\sigma$ is represented by a matrix 
$$A_{\sigma}=\left(\begin{array}{cc}
\zeta & a \\
0 & 1 
\end{array}\right)
$$
as an automorphism of $\Bbb P^1$,
where $\zeta$ is an $l$-th root of unity and $a \in K$,  
because $\sigma(P)=P$ and $\sigma^{ql}=1$. 
If $\zeta \ne 1$, then we find that $\sigma$ has two fixed points, by direct computations. 
Therefore, $\zeta=1$ by our assumption. 
Then, any element of $G(P) \setminus \{1\}$ is of order $p$, by direct computations.  
If $l>1$ then there exists an element whose order is not divisible by $p$, by Sylow's theorem. 
This is a contradiction.
Therefore, $l=1$.

We consider the assertion (4). 
We may assume that $P=(1:0)$. 
Let $e_P=q$, where $q$ is a power of $p$. 
It follows from Fact \ref{Galois covering}(1) that the order of $G(P)$ is equal to $q$. 
Let $\sigma \in G(P)$. 
Then, by direct computations, $\sigma$ is represented by a matrix 
\begin{align*} 
A_{\sigma}=\left(\begin{array}{cc}
1 & a(\sigma) \\
0 & 1 
\end{array}\right) 
\end{align*} 
as an automorphism of $\Bbb P^1$, where $a(\sigma) \in K$,  
because $\sigma(P)=P$ and $\sigma^{q}=1$. 
Then, it is not difficult to check that $\sigma$ fixes only a point $P$ and the subset $\{a(\sigma)|\sigma \in G\} \subset K$ forms an additive subgroup. 
\end{proof}

\section{Proof} 
If $p>0$, $d-1$ is a power of $p$ and $C$ is projectively equivalent to a Fermat curve of degree $d$, then it follows from a result of Homma \cite{homma2} that $\delta(C)=(d-1)^3+1$.  

Throughout this section, we assume that $(d-1)^3+1=(2\times ((d-1)(d-2)/2)-2)d+3d \le \delta(C) < \infty$. 
Let $\lambda(C)$ be the cardinality of $\Delta \setminus F(\hat{C})$ and let $\mu(C)=(d-1)^3+1-\{(2g-2)(M(C)+1)+3d \}$. 
Then, $\lambda(C) \ge \mu(C)$, and $\mu(C)>0$ if $g<(d-1)(d-2)/2$ or $M(C)< d-1$. 
Since the present author proved that $\delta(C)=0$ or $\infty$ if $d=M(C)$ in \cite{fukasawa2}, we may assume that $d>M(C)$. 
It follows from a result of the author \cite{fukasawa3} for smooth curves (or a generalization of Pardini's theorem by Hefez \cite{hefez} and Homma \cite{homma1}) that $\mu(C)=0$ only if $p>0$, $d-1$ is a power of $p$, and $C$ is a Fermat curve of degree $d$.
Therefore, we may assume that $\mu(C) > 0$. 

(I) {\it The case where there exists a singular point $Q$ with multiplicity $d-1$.} 
Then, $\hat{C}$ is rational and $Q$ is a unique singular point. 
It follows from B\'ezout's theorem that the tangent line $T_PC$ at any smooth point $P$ does not contain $Q$. 
Since $d>M(C)$, $T_PC$ intersects some smooth point $R$. 

(I-1) Assume that $M(C) \ge 3$. 
Since $\lambda(C)>0$, there exists a smooth point $P$ which is Galois and $I_P(C, T_PC)=M(C)$. 
Then, it follows from Lemma \ref{index} and Fact \ref{Galois covering}(2) that $I_R(C, T_RC)=M(C)-1 \ge 2$. 
This is a contradiction to the order sequence $\{0, 1, M(C)\}$. 

(I-2) Assume that $M(C)=2$. 
Note that for any $R \in C_{\rm sm}$, $T_RC$ contains at most one inner Galois point by Lemma \ref{twoGalois}. 
Therefore, we have at least $\mu(C)$ Galois points $P$ which do not lie on any tangent lines at flexes. 

(I-2-1) Assume that $s(\gamma)=1$. 
Let $P \in r(\Delta) \setminus \bigcup_{\hat{R} \in F(\hat{C})}T_{\hat{R}}C$. 
Assume that the fiber $r^{-1}(Q)$ contains two or more points. 
It follows from Lemma \ref{rational-ramification}(2) that there exists a ramification point $\hat{R} \in \hat{C}$ with $R=r(\hat{R}) \ne Q$ for $\hat{\pi}_P$.  
It follows from Lemma \ref{index} and Fact \ref{Galois covering}(4) that $P \in T_RC$ and $I_{P_0}(C, T_{R}C)=2$ for any point $P_0\in C \cap T_RC$ with $P_0 \ne P$, for each $P \in r(\Delta) \setminus \bigcup_{\hat{R} \in F(\hat{C})}T_{\hat{R}}C$. 
Therefore, we have at least $\mu(C)$ multiple tangent lines.  
It follows from Fact \ref{plucker} that 
$$ \mu(C) \le \frac{(d_0-1)(d_0-2)}{2}, $$
where $d_0=2g-2+2d=2d-2$. 
Since $\mu(C)= d^3-3d^2+6$, we have an inequality 
$$ d^3-3d^2+6 \le 2d^2-7d+6. $$
Then, we have $g_1(d):=d^3-5d^2+7d \le 0$. 
Since $g_1(4)=12$, this is a contradiction.

Assume that the fiber $r^{-1}(Q)$ consists of a unique point $\hat{Q}$. 
Then, by Fact \ref{Galois covering}(2), $\hat{\pi}_P$ is ramified at $\hat{Q}$ with index $d-1$. 
It follows from Lemma \ref{rational-ramification}(3)(4) that there exists a Galois point $P$ such that $\hat{\pi}_P$ is ramified only at $\hat{Q}$ if and only if $p>0$ and $d-1$ is a power of $p$. 
If $\hat{\pi}_P$ has another ramification point for any $P \in r(\Delta) \setminus \bigcup_{\hat{R} \in F(\hat{C})}T_{\hat{R}}C$, then, similarly to the discussion above, there is a contradiction. 
Therefore, $d-1$ is a power of $p$ and $\hat{Q}$ is a unique ramification point of $\hat{\pi}_P$ for any $P \in r(\Delta) \setminus \bigcup_{\hat{R} \in F(\hat{C})}T_{\hat{R}}C$, by Lemma \ref{rational-ramification}(4) again. 
Let the normalization $r(s:t)=(\phi_0(s, t): \phi_1(s,t):\phi_2(s,t))$, where $\phi_i$ is a homogeneous polynomial of degree $d$ in variables $s, t$ for $i=0, 1, 2$. 
For a suitable system of coordinates, we may assume that $Q=(1:0:0)$ and a line $Z=0$ is a tangent line at $Q$. 
Since a solution of $\phi_2(s, t)=0$ is unique, we may assume that $\phi_2(s, t)=t^d$. 
Then, $\hat{Q}=(1:0)$. 
Since the projection $\hat{\pi}_Q$ from $Q$ is given by $(s:1) \mapsto (\phi_1(s, 1):1)$ and this is birational, $\phi_1(s, 1)$ is of degree one. 
Therefore, we may assume that $\phi_1(s, t)=st^{d-1}$. 
We may also assume that $P=(0:0:1)$. 
Then, $\phi_0(s, 1)=\sum_{i=1}^d a_i s^i$ for some $a_i \in K$. 
The projection $\hat{\pi}_P$ from $P$ is given by $(\sum_{i=1}^da_is^i:s)=(\sum_{i=1}^da_is^{i-1}:1)$. 
Since $\hat{\pi}_P$ gives a Galois covering and $\sigma(\hat{Q})=\hat{Q}$ for any $\sigma \in G_P$, it follows from Lemma \ref{rational-ramification}(4) and \cite[Proposition 1.1.5 and Theorem 1.2.1]{goss} that $a_{i}=0$ if $i-1$ is not a power of $p$. 
Let $P_2=(\phi_0(\alpha,1): \alpha:1)$ be inner Galois. 
Then, the projection $\hat{\pi}_{P_2}$ is given by $(\phi_0(s, 1)-\phi_0(\alpha): s-\alpha)$. 
Let $u:=s-\alpha$. 
Then, $\hat{\pi}_{P_2}=(\phi_0(u+\alpha, 1)-\phi_0(\alpha):u)=(\sum_{i=0}^e a_i\{(u+\alpha)^{p^i+1}-\alpha^{p^i+1}\}:u)$. 
Note that $\{(u+\alpha)^{p^i+1}-\alpha^{p^i+1}\}/u=u^{p^i}+\alpha u^{p^i-1}+\alpha^{p^i}$. 
By considering the differential of this polynomial, if $\alpha \ne 0$, then $\hat{\pi}_{P_2}$ is ramified other points than $\hat{Q}$.  
This is a contradiction to the uniqueness of the ramification point.   

(I-2-2) Assume that $s(\gamma) \ge 2$. 
Then, $q(\gamma) \ge 2$ and the number of tangent lines whose contact points are strictly less than $s(\gamma)$ is at most
$$ 2g_C-2-s(\gamma)(2g_{C^*}-2)=-2+2s(\gamma) \le -2+2(d/2)=d-2 $$
by Riemann-Hurwitz formula. 
Since $\mu(C)-(d-2)= d^3-3d^2-d+8 >0$, there exist an inner Galois point $P$ and a smooth point $R \in C_{\rm sm}$ with $R \ne P$ such that $T_PC=T_RC$ and $I_P(C, T_PC)=I_R(C, T_RC)=2$. 
By Lemma \ref{index} and Fact \ref{Galois covering}(4), this is a contradiction. 

(II) {\it The case where there exists NO singular point with multiplicity $d-1$.} 
Firstly, we prove that $\hat{C}_0 =\hat{C}$. 
Let $Q$ be a singular point with multiplicity $m \le d-2$. 
Note that the number of tangent line at $Q$ is at most $m$. 

Assume that $Q$ is not a strange center. 
We prove that any point $\hat{R} \in \hat{C}$ with $r(\hat{R})=Q$ is a non-singular branch. 
If there exists a line containg $Q$ and two Galois points, then we have this assertion by Lemma \ref{twoGalois}. 
Therefore, we consider the case where any line containing $Q$ has at most one inner Galois point. 
If we consider the projection $\hat{\pi}_{Q}$ from $Q$, then the number of ramification points is at most $2g-2+2(d-2) \le d^2-d-4$. 
Since $\delta(C) \ge (d-1)^3+1$, there exist a Galois point $P$ and a point $\hat{R} \in \hat{C}$ with $R=r(\hat{R}) \ne P, Q$ such that ${\rm ord}_{\hat{R}}r^*h=1$, where $h$ is a defining polynomial of the line $\overline{PR}$, by Lemma \ref{index}. 
It follows from Fact \ref{Galois covering}(2) that any point $\hat{R}$ in the fiber $r^{-1}(Q)$ is a non-singular branch. 

We prove that $Q$ is not a strange center. 
If there exists a line containg $Q$ and two Galois points, then we have this assertion by Lemma \ref{twoGalois}.
Therefore, we consider the case where any line containing $Q$ has at most one inner Galois point. 
Assume that $Q$ is a strange center. 
If we consider the projection $\hat{\pi}_{Q}$ from $Q$, then the number of ramification points of the separable closure of $K(C)/\hat{\pi}_Q^*K(\Bbb P^1)$ is at most $2g-2+2(d-2) \le d^2-d-4$. 
Since $\delta(C) \ge (d-1)^3+1$, there exist a Galois point $P$ such that $I_P(C, \overline{PQ})=M(C)$. 
Since any point $\hat{R}$ with $r(\hat{R}) \ne Q$ is a non-singular branch by discussions above, $Q \in T_{\hat{R}}C$ for any point $\hat{R}$ with $r(\hat{R}) \ne Q$. 
Therefore, the projection $\hat{\pi}_P$ is ramified only at points in a line $\overline{PQ}$. 
Since the ramification index at $P$ for $\hat{\pi}_P$ is equal to $M(C)-1$ by Lemma \ref{index}, there exist only tame ramification points for $\hat{\pi}_P$, by Fact \ref{Galois covering}(2). 
By Riemann-Hurwitz formula, this is a contradiction.

(II-1)
Assume that $M(C) \ge 3$. 
Since $\lambda(C)>0$, there exists a Galois point $P \in C_{\rm sm}$ such that $I_P(C, T_PC)=M(C)$. 
Since $d>M(C)$, there exists a point $R \in C \cap T_PC$. 
Then, the ramification index $e_{\hat{R}}=M(C)-1 \ge 2$ at $\hat{R}$ for $\hat{\pi}_P$, where $\hat{R} \in \hat{C}$ with $r(\hat{R})=R$. 
This is a contradiction to the order sequence $\{0, 1, M(C)\}$.  

(II-2) Assume that $M(C)=2$. 
Note that for any $\hat{R} \in \hat{C}$, $T_{\hat{R}}C$ contains at most one inner Galois points by Lemma \ref{twoGalois}.
Therefore, we have at least $\mu(C)$ Galois points $P$ which are not flexes such that there exist no flex $\hat{R}$ with $P \in T_{\hat{R}}C$.  
Let $\hat{P} \in \Delta \setminus r^{-1}(\bigcup_{\hat{R} \in F(\hat{C})}T_{\hat{R}}C)$ and $P=r(\hat{P})$. 
It follows from Riemann-Hurwitz formula that $\hat{\pi}_P$ is ramified at some point $\hat{R} \in \hat{C}$. 
It follows from Lemma \ref{index} and Fact \ref{Galois covering}(2) that $P \in T_{\hat{R}}C$ and the order of $T_{\hat{R}}C$ at $\hat{P}_0$ is equal to $2$, for each $\hat{P} \in \Delta \setminus r^{-1}(\bigcup_{\hat{R} \in F(\hat{C})}T_{\hat{R}}C)$, a ramification point $\hat{R}$ and $\hat{P}_0 \in r^{-1}(C \cap T_RC)$ with $\hat{P}_0 \ne \hat{P}$. 
Therefore, $d-1$ should be even, by Fact \ref{Galois covering}(3). 
Let $n(P)$ be the number of multiple tangent lines for such a Galois point $P$. 

(II-2-1) Assume that $p \ne 2$. 
Then, $s(\gamma)=1$ and $q(\gamma)=1$. 
Since $\hat{\pi}_P$ has only tame ramifications, it follows from Riemann-Hurwitz formula that 
$$ 2g-2=-2(d-1)+\frac{d-1}{2} \times n(P). $$ 
By Lemma \ref{twoGalois}, we have at least $\mu(C) \times n(P)$ multiple tangents. 
It follows from Fact \ref{plucker} that 
$$ \mu(C) \times n(P) \le \frac{(d_0-1)(d_0-2)}{2}, $$
where $d_0=2g-2+2d$. 
Since $\mu(C)\ge d^3-6d^2+9d$ and $d_0 \le d^2-d$, we have an inequality 
$$ (d^3-6d^2+9d) \times \frac{2}{d-1} \le \frac{(d_0-1)(d_0-2)}{2(d_0-2)} \le \frac{d^2-d-1}{2}. $$
Then, we have $g_2(d):=3d^3-22d^2+36d-1\le 0$. 
Since $g_2(5)=4$ and $d-1$ is even, this is a contradiction.

(II-2-2) Assume that $p=2$. 
Then, $q(\gamma)=2$. 
If $s(\gamma) \ge 2$, then the number of tangent lines whose contact points are strictly less than $s(\gamma)$ is at most 
$$ 2g_C-2-s(\gamma)(2g_{C^*}-2) \le 2g_C-2+(d/2)\times 2<d^2-2d $$
by Riemann-Hurwitz formula. 
Since any tangent line contains at most one inner Galois point by Lemma \ref{twoGalois} and $\mu(C)-(d^2-2d)\ge d^3-7d^2+11d >0$ if $d \ge 5$, there exist an inner Galois point $P$ and a point $\hat{R} \in \hat{C}$ with $R =r(\hat{R}) \ne P$ such that $T_PC=T_{\hat{R}}C$ and $I_P(C, T_PC)=\nu_{\hat{R}}=2$. 
Considering the projection $\hat{\pi}_P$ and Lemma \ref{index} and Fact \ref{Galois covering}(2), this is a contradiction.  
Therefore, $s(\gamma)=1$.

By the above arguments, we have the Theorem and the following in $p=2$. 

\begin{proposition}
Assume that $p=2$, $\delta(C) \ge (d-1)^3+1$ and $C$ is not projectively equivalent to a Hermitian curve. 
Then, we have the following.
\begin{itemize}
\item[(1)] $d$ is odd, $M(C)=q(\gamma)=2$ and $s(\gamma)=1$. 
\item[(2)] There exists no singular point with multiplicity $d-1$.   
\item[(3)] For any point $\hat{R} \in \hat{C}$, there exist a line in $\Bbb P^2$ defined by $h=0$ around $r(\hat{R})$ such that ${\rm ord}_{\hat{R}}r^*h=1$.  
\end{itemize}
\end{proposition}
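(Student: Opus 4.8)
The plan is to prove the Proposition by re-running the case analysis that proves Theorem~\ref{main theorem}, now under the two extra hypotheses $p=2$ and that $C$ is not projectively equivalent to a Hermitian curve, and reading off the one surviving branch of the dichotomy. The standing reductions carry over unchanged: since $\delta(C)\in\{0,\infty\}$ whenever $d=M(C)$ by \cite{fukasawa2}, we have $d>M(C)$, so $M(C)\le d-1$; and the genus bound $g\le(d-1)(d-2)/2$ with $M(C)\le d-1$ gives $\mu(C)\ge 0$, while $\mu(C)=0$ forces the Hermitian curve by \cite{fukasawa3}, \cite{hefez}, \cite{homma1}, so the non-Hermitian hypothesis yields $\mu(C)>0$, hence $\lambda(C)\ge\mu(C)>0$. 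Thus an inner Galois point lying off every flex tangent is always available.

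First I would fix $M(C)=2$. If $M(C)\ge 3$, a Galois point $P$ realizing $I_P(C,T_PC)=M(C)$ forces, through Lemma~\ref{index} and Fact~\ref{Galois covering}(2), a second point with contact order $M(C)-1\ge 2$, contradicting the order sequence $\{0,1,M(C)\}$; this argument is insensitive to $p$ and to the presence of a singular point, so $M(C)=2$. Next I would prove part (2) by excluding Case (I), i.e. a singular point $Q$ of multiplicity $d-1$. If $s(\gamma)=1$ and the fiber over $Q$ has at least two points, Fact~\ref{plucker} against $\mu(C)$ gives $g_1(d)\le 0$, impossible for $d\ge 4$; if that fiber is a single point, the normalization computation forces $d-1$ to be a power of $p$ with a unique ramification point, whereupon a second inner Galois point $P_2=(\phi_0(\alpha,1):\alpha:1)$ with $\alpha\ne 0$ picks up further ramification (the differential of $u^{p^i}+\alpha u^{p^i-1}+\alpha^{p^i}$ is nonzero for $\alpha\ne 0$, still in characteristic $2$), contradicting uniqueness; and if $s(\gamma)\ge 2$, the few-contact tangent bound $d-2$ from Riemann--Hurwitz is far below $\mu(C)$, producing a bitangent through a Galois point against Fact~\ref{Galois covering}(4). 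So no such $Q$ exists and we are in Case (II).

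Being in Case (II), part (3) is exactly the preliminary statement $\hat{C}_0=\hat{C}$: for any singular point $Q$ of multiplicity $m\le d-2$ the projection $\hat{\pi}_Q$ ramifies over at most $2g-2+2(d-2)\le d^2-d-4$ points, which is dwarfed by the $(d-1)^3+1$ Galois points, so by Lemma~\ref{index} some Galois point $P$ has a branch over $Q$ with ${\rm ord}_{\hat R}r^*h=1$, and Fact~\ref{Galois covering}(2) then makes every branch over $Q$ non-singular; the same counting rules out $Q$ being a strange center. For the numerical part of (1) I would invoke (II-2): a generic Galois point $P$ off the flex tangents satisfies $e_P=M(C)-1=1$ yet must ramify somewhere by Riemann--Hurwitz, and each ramification point $\hat R$ lies on a tangent line $T_{\hat R}C=\overline{PR}$ of contact order $2$ through $P$, so by Fact~\ref{Galois covering}(2),(3) the common index $2$ divides $d-1$ and $d$ is odd. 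Finally, in characteristic $2$ the computation of (II-2-2) gives $q(\gamma)=2$, and since $\mu(C)-(d^2-2d)\ge d^3-7d^2+11d>0$ for $d\ge 5$ the few-contact tangent count excludes $s(\gamma)\ge 2$, leaving $s(\gamma)=1$.

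The hard part is the determination $q(\gamma)=2,\ s(\gamma)=1$. For $p\ne 2$ the projection $\hat{\pi}_P$ ramifies only tamely (index $2$), the exact identity $2g-2=-2(d-1)+\frac{d-1}{2}n(P)$ holds, and feeding it into Fact~\ref{plucker} forces $g_2(d)\le 0$, a contradiction that kills the whole configuration. For $p=2$, however, the index $2=p$ is wild, the different exponents exceed $e-1$, and this identity degenerates into an inequality that only diminishes the multiple-tangent count; the tame Riemann--Hurwitz plus Pl\"ucker contradiction therefore cannot be reproduced. This is precisely why the characteristic-$2$ case does not close and instead survives as the rigid structure recorded here, so the genuinely delicate step is to show that the Gauss map must be purely inseparable of degree $2$ rather than separable, controlling the wild ramification rather than sidestepping it.
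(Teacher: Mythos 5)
Your proposal is correct and follows essentially the same route as the paper: the Proposition is obtained exactly by re-running the case analysis of the Theorem's proof under $p=2$ and the non-Hermitian hypothesis, and recording the one branch ((II-2-2)) that does not terminate in a contradiction. The only minor caveat is one of ordering: in Case (II) the order-sequence argument that pins down $M(C)=2$ needs the point $R\in C\cap T_PC$ to lie on a non-singular branch, so the paper establishes $\hat{C}_0=\hat{C}$ (your part (3)) before running (II-1), whereas you state $M(C)=2$ first; this is easily rearranged and does not affect correctness.
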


\begin{center} {\bf Acknowledgements} \end{center} 
The author was partially supported by Grant-in-Aid for Young Scientists (B) (22740001), JSPS, Japan.


\begin{thebibliography}{100} 
\bibitem{bayer-hefez} V. Bayer and A. Hefez, Strange curves, Comm. Algebra {\bf 19} (1991), 3041--3059.  
\bibitem{fukasawa1} S. Fukasawa, Galois points for a plane curve in arbitrary characteristic, Geom. Dedicata {\bf 139} (2009), 211-218. 
\bibitem{fukasawa2} S. Fukasawa, Galois points for a non-reflexive plane curve of low degree, preprint.
\bibitem{fukasawa3} S. Fukasawa, Complete determination of the number of Galois points for a smooth plane curve, preprint, arXiv:1011.3648. 
\bibitem{fukasawa-hasegawa} S. Fukasawa and T. Hasegawa, Singular plane curves with infinitely many Galois points, J. Algebra {\bf 323} (2010), 10--13. 
\bibitem{goss} D. Goss, Basic Structures of Function Field Arithmetic, Springer, Berlin (1996). 
\bibitem{hefez} A. Hefez, Non-reflexive curves, Compositio Math. {\bf 69} (1989), 3--35.   
\bibitem{hefez-kleiman} A. Hefez and S. Kleiman, Notes on the duality of projective varieties, ``Geometry Today,'' Prog. Math., vol 60, Birkh\"{a}user, Boston, 1985, pp. 143--183. 
\bibitem{hkt} J. W. P. Hirschfeld, G. Korchm\'{a}ros and F. Torres, Algebraic Curves over a Finite Field, Princeton Univ. Press (2008).  
\bibitem{homma1} M. Homma, A souped-up version of Pardini's theorem and its application to funny curves, Compositio Math. {\bf 71} (1989), 295--302. 
\bibitem{homma2} M. Homma, Galois points for a Hermitian curve, Comm. Algebra {\bf 34} (2006), 4503--4511. 
\bibitem{kleiman} S. Kleiman, Tangency and duality, in ``Proc. 1984 Vancouver Conf. in Algebraic Geometry,'' CMS Conf. Proc. {\bf 6}, Amer. Math. Soc. (1986), pp. 163--226. 
\bibitem{miura} K. Miura, Galois points on singular plane quartic curves, J. Algebra {\bf 287} (2005), 283--293. 
\bibitem{miura-yoshihara} K. Miura and H. Yoshihara, Field theory for function fields of plane quartic curves, J. Algebra {\bf 226} (2000), 283--294. 
\bibitem{pardini} R. Pardini, Some remarks on plane curves over fields of finite characteristic, Compositio Math. {\bf 60} (1986), 3--17. 
\bibitem{piene} R. Piene, Numerical characters of a curve in projective $n$-space, In: Real and Complex Singularities, Oslo 1976, pp.~475--495.
\bibitem{stichtenoth} H. Stichtenoth, Algebraic Function Fields and Codes, Universitext, Springer-Verlag, Berlin (1993).
\bibitem{stohr-voloch} K. O. St\"{o}hr and J. F. Voloch, Weierstrass points and curves over finite fields, Proc. London Math. Soc. (3) {\bf 52} (1986), 1--19. 
\bibitem{yoshihara1} H. Yoshihara, Function field theory of plane curves by dual curves, J. Algebra {\bf 239} (2001), 340--355. 
\end{thebibliography}
\end{document}